\documentclass[10pt]{article}
\usepackage[english]{babel}
\usepackage[T1]{fontenc}
\usepackage[utf8]{inputenc}
\usepackage{lmodern}
\usepackage{amsmath,amssymb,amsthm,mathtools,mathrsfs}
\usepackage{aliascnt}
\usepackage{esint}
\usepackage{geometry}
\usepackage{microtype}
\usepackage{enumitem}
\usepackage{booktabs}
\usepackage{csquotes}
\usepackage[hidelinks]{hyperref}
\usepackage[nameinlink,noabbrev]{cleveref}

\newtheorem{theorem}{Theorem}
\newtheorem{proposition}[theorem]{Proposition}

\newtheorem{lemma}[theorem]{Lemma}

\newcommand{\RR}{\mathbb R}
\newcommand{\A}{\mathcal A}
\newcommand{\Ml}{\mathcal M_{\mathrm{lac}}}
\newcommand{\Mf}{\mathcal M_{\mathrm{full}}}
\newcommand{\Mn}{\mathcal M_{m,\mathrm{lac}}}
\newcommand{\HL}{M_{\mathrm{HL}}}
\newcommand{\Sph}{S_{\mathrm{full}}}

\title{Strong-type endpoint obstructions for bilinear spherical maximal operators}
\author{Vicente Vergara\footnote{Department of Mathematics, Faculty of Physical and Mathematical Sciences, University of Concepción, Concepción, Chile. \texttt{vvergaraa@udec.cl}}}

\begin{document}
\maketitle

\begin{abstract}
For the bilinear spherical averages in dimension $d\geq2$, we show that the lacunary maximal operator is not bounded from $L^1(\RR^d)\times L^q(\RR^d)$ to $L^r(\RR^d)$ whenever $1\leq q\leq\infty$ and $1/r=1+1/q$. The obstruction is realized by fixed nonnegative inputs; a second construction proves quantitative failure on smooth compactly supported functions and finitely many dyadic scales. The same conclusions hold for the full maximal operator. We give a sharp Lorentz-index criterion in the range $q>d/(d-1)$ and extend the negative result to all multilinear spherical maximal operators with an $L^1$ input. We indicate the precise strong-type boundary statements to which these conclusions apply, without drawing conclusions about separate weak-type or interior estimates.
\end{abstract}

\medskip
\noindent\textbf{Keywords.}
Bilinear spherical maximal operator; lacunary maximal function; endpoint estimates; Lorentz spaces; multilinear maximal operator

\medskip
\noindent\textbf{2020 Mathematics Subject Classification.} 
Primary 42B25; Secondary 42B15

\section{Introduction and main result}
Let $d\ge2$ and let $\sigma_{2d-1}$ be the normalized surface measure on $S^{2d-1}\subset\RR^d\times\RR^d$. Define
\begin{equation}\label{eq:averages}
\A_t(f,g)(x)=\int_{S^{2d-1}} f(x-ty)g(x-tz)\,d\sigma_{2d-1}(y,z),\qquad t>0,
\end{equation}
with associated maximal operators
\begin{equation}\label{eq:maxima}
\Ml(f,g)=\sup_{k\in\mathbb Z}|\A_{2^k}(f,g)|,\qquad
\Mf(f,g)=\sup_{t>0}|\A_t(f,g)|.
\end{equation}
Some references place the absolute value inside, rather than outside, the
spherical integral. The two conventions agree for nonnegative input functions.
Since all counterexamples below use nonnegative inputs, the negative
conclusions apply to either convention.

Strong bounds for the lacunary operator hold when $1<p,q\le\infty$ \cite[Theorem~1]{BF}; for the full operator, the interior strong bounds require in addition $r>d/(2d-1)$, where $1/r=1/p+1/q$ \cite[Section~2]{JL}. The behavior of the faces with an input exponent equal to one is different. Here is the endpoint statement, including both corners.

\begin{theorem}\label{thm:main}
For every $d\ge2$ and $1\le q\le\infty$, set $r$ by $1/r=1+1/q$ (where $1/\infty=0$). There exist nonnegative $f\in C_c^\infty(\RR^d)$ and $g\in C^\infty(\RR^d)\cap L^q(\RR^d)$ such that
\begin{equation}\label{eq:failure}
\Ml(f,g)\notin L^r(\RR^d).
\end{equation}
In particular neither $\Ml$ nor $\Mf$ is bounded from $L^1\times L^q$ to $L^r$. The same assertions hold with the two inputs interchanged.
\end{theorem}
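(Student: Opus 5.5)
The plan is to use a single fixed bump $f$ near the origin, paired with a slowly decaying radial $g$, and to show that at each large $|x|$ one dyadic scale $t\sim|x|$ already forces $\A_t(f,g)(x)\gtrsim |x|^{-d/r}$ up to a logarithm. The first step is to disintegrate $\sigma_{2d-1}$ along the projection $(y,z)\mapsto y$. For $d\ge2$ this gives, for nonnegative $F,G$,
\begin{equation*}
\int_{S^{2d-1}}F(y)G(z)\,d\sigma_{2d-1}(y,z)=c_d\int_{|y|<1}F(y)\,(1-|y|^2)^{\frac{d-2}{2}}\int_{S^{d-1}}G\bigl(\sqrt{1-|y|^2}\,\omega\bigr)\,d\sigma_{d-1}(\omega)\,dy .
\end{equation*}
Applied to \eqref{eq:averages}, this writes $\A_t(f,g)(x)$ as a $y$-integral of $f(x-ty)$ against spherical averages of $g$. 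These averages are taken over spheres centered at $x$ of radius $t\sqrt{1-|y|^2}$.

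Next, fix $0\le f\in C_c^\infty$ with $\operatorname{supp}f\subset\{|w|\le1\}$ and $\int f=1$. Take $g$ radial, nonnegative and radially decreasing:
\begin{equation*}
g(z)=(1+|z|^2)^{-\frac{d}{2q}}\,\log(e+|z|^2)^{-1}\quad(q<\infty),\qquad g\equiv1\quad(q=\infty).
\end{equation*}
Polar coordinates show $g\in L^q$, since $\int^\infty s^{-1}(\log s)^{-q}\,ds<\infty$ for $q>1$. For $q=1$ I would instead use the exponent $a=3/2$ on the logarithm; the computation below only needs $a>1/q$ and $ar\le1$, and these hold for $1<a\le 1+1/q$.

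Now fix $|x|\ge 4$ and pick $k$ with $t=2^k$ and $|x|/t\in[1/4,1/2]$. On the support of $f(x-t\,\cdot)$ we have $|y-x/t|\le 1/t\le1/4$, so $|y|\le 3/4$. There the weight $(1-|y|^2)^{(d-2)/2}$ is bounded below. The sphere radius $t\sqrt{1-|y|^2}\le t\le 4|x|$, so every point of the sphere lies within $5|x|$ of the origin. Because $g$ is radially decreasing, the inner average is at least $g(5|x|)$. Changing variables $w=x-ty$ gives $\int f(x-ty)\,dy=t^{-d}$, and therefore
\begin{equation*}
\Ml(f,g)(x)\ge \A_{2^k}(f,g)(x)\gtrsim |x|^{-d}g(5|x|)\gtrsim |x|^{-d(1+1/q)}(\log|x|)^{-a}=|x|^{-d/r}(\log |x|)^{-a}.
\end{equation*}
Raising to the power $r$ gives $\int_{|x|\ge4}|x|^{-d}(\log|x|)^{-ar}\,dx=\infty$, because $ar=aq/(q+1)\le1$. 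For $q=\infty$ the bound is simply $|x|^{-d}\notin L^1$. This proves \eqref{eq:failure}.

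For the consequences: $\Mf\ge\Ml$ pointwise, so $\Mf(f,g)\notin L^r$ as well. Since $f\in L^1$ and $g\in L^q$, neither operator can be bounded. For the interchanged inputs, the map $(y,z)\mapsto(z,y)$ preserves $\sigma_{2d-1}$, so $\A_t(f,g)=\A_t(g,f)$ and the same pair works. The main step requiring care is the disintegration formula, in particular its positivity and the range of the weight when $d=2$, where the exponent is $0$. The rest is bookkeeping of constants depending only on $d$. I expect the only delicate exponent choice to be the logarithm, which must put $g$ in $L^q$ while keeping $\Ml(f,g)^r$ non-integrable; this is exactly the window $1/q<a\le 1+1/q$ found above.
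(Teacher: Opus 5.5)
Your proof is correct and follows essentially the paper's argument. You slice $\sigma_{2d-1}$ over the first factor, pair a fixed bump $f$ with a radial $g\sim|u|^{-d/q}(\log|u|)^{-a}$ where $1/q<a\le 1/r$, and use one dyadic scale $t\sim|x|$ to get $\Ml(f,g)(x)\gtrsim|x|^{-d/r}(\log|x|)^{-a}$. The differences are cosmetic: your everywhere-positive, radially decreasing $g$ needs only the upper bound $|x-t\sqrt{1-|y|^2}\,\omega|\le5|x|$, with no cutoff or lower triangle bound, and you integrate over all $|x|\ge4$ instead of summing over the disjoint shells $D_k$.
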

The full operator dominates the lacunary one. All negative statements below concern \emph{strong} bounds; in particular they do not assert failure of weak-type or restricted weak-type estimates. Since $r<1$ for finite $q$, the $L^r$ quasi-norm is handled directly through its defining integral.

\section{An endpoint construction}\label{sec:proof}
We fix a normalization useful for checking constants. For every nonnegative measurable $H$ on $S^{2d-1}$,
\begin{equation}\label{eq:coarea}
\begin{split}
\int_{S^{2d-1}}H(y,z)\,d\sigma_{2d-1}(y,z)
={}&c_d\int_{B(0,1)}(1-|y|^2)^{(d-2)/2}\\[-3pt]
&\times\int_{S^{d-1}}H\bigl(y,\sqrt{1-|y|^2}\,\omega\bigr)
\,d\sigma_{d-1}(\omega)\,dy,
\end{split}
\end{equation}
where both spherical measures are normalized and
\begin{equation}\label{eq:const}
c_d=\frac{\Gamma(d)}{\pi^{d/2}\Gamma(d/2)}.
\end{equation}
Radial integration and the beta integral give
\[
\int_{B(0,1)}(1-|y|^2)^{(d-2)/2}\,dy
=\frac{\pi^{d/2}\Gamma(d/2)}{\Gamma(d)}.
\]
Thus the density is uniformly positive on every closed subball of $B(0,1)$, including when $d=2$.

\begin{proof}[Proof of Theorem~\ref{thm:main}]
Choose $f\in C_c^\infty(\RR^d)$ nonnegative, supported in $B(0,1)$, with $f\ge1$ on $B(0,1/2)$. Choose $\eta\in C^\infty([0,\infty))$ with $0\le\eta\le1$, $\eta=0$ on $[0,3]$ and $\eta=1$ on $[4,\infty)$.

Suppose first $1\le q<\infty$. Select $\alpha$ with
\begin{equation}\label{eq:alpha}
\frac1q<\alpha\le\frac1r=1+\frac1q
\end{equation}
and define the smooth radial function, set equal to zero near the origin,
\begin{equation}\label{eq:g}
g(u)=\eta(|u|)|u|^{-d/q}(\log|u|)^{-\alpha}.
\end{equation}
Since $\alpha q>1$,
\begin{equation}\label{eq:gq}
\int_{\RR^d}|g(u)|^q\,du\lesssim 1+
\int_4^\infty\frac{d\rho}{\rho(\log\rho)^{\alpha q}}<\infty.
\end{equation}
For $q=\infty$, set $g\equiv1$ and $\alpha=0$.

For $k\ge6$, write $t=2^k$ and consider the disjoint shells and integration balls
\begin{equation}\label{eq:shells}
D_k=\left\{x:\frac t2\le|x|\le\frac{3t}{5}\right\},\qquad
Y_{t,x}=\left\{y:\left|y-\frac xt\right|\le\frac1{2t}\right\}.
\end{equation}
When $x\in D_k$ and $y\in Y_{t,x}$, we have $|x-ty|\le1/2$ and $|y|\le3/5+1/(2t)<5/8$. Hence $f(x-ty)\ge1$ and $\sqrt{1-|y|^2}>3/4$. For every $\omega\in S^{d-1}$, the triangle inequalities imply
\begin{equation}\label{eq:geometry}
\frac{3t}{20}\le\left|x-t\sqrt{1-|y|^2}\,\omega\right|
\le\frac{8t}{5}.
\end{equation}
Since $t\ge64$, the lower endpoint is greater than $4$. The cutoff in \eqref{eq:g} is therefore identically one throughout this region. In the case $q<\infty$, monotonicity of the radial tail gives
\[
g\bigl(x-t\sqrt{1-|y|^2}\,\omega\bigr)
\ge\frac{(8t/5)^{-d/q}}{(\log(8t/5))^{\alpha}}
\gtrsim t^{-d/q}k^{-\alpha}.
\]
The same inequality holds with $q=\infty$, $\alpha=0$. The weight in \eqref{eq:coarea} is at least $(39/64)^{(d-2)/2}$ on $Y_{t,x}$, whose volume equals $|B(0,1)|(2t)^{-d}$. Restricting the integral to that ball thus gives
\begin{equation}\label{eq:lower}
\Ml(f,g)(x)\ge\A_{2^k}(f,g)(x)
\gtrsim t^{-d(1+1/q)}k^{-\alpha}
=2^{-kd/r}k^{-\alpha},\qquad x\in D_k.
\end{equation}
The shells are pairwise disjoint and satisfy
$|D_k|=|B(0,1)|((3/5)^d-(1/2)^d)2^{kd}$. Consequently
\[
\int_{\RR^d}\Ml(f,g)(x)^r\,dx
\gtrsim\sum_{k\ge6}|D_k|\,2^{-kd}k^{-\alpha r}
\asymp\sum_{k\ge6}k^{-\alpha r}=\infty,
\]
by \eqref{eq:alpha}; for $q=\infty$ the same calculation uses $r=1$ and $\alpha=0$. Finally, the averages are symmetric in their inputs and $\Mf\ge\Ml$ pointwise.
\end{proof}

\begin{proposition}[Finite-scale, smooth witnesses]\label{prop:smooth}
For every $d\ge2$ and $1\le q\le\infty$ there exist nonnegative $f,g_N\in C_c^\infty(\RR^d)$, $N\ge7$, for which
\begin{equation}\label{eq:quantitative}
\frac{\displaystyle\Big\|\max_{6\le k\le N}|\A_{2^k}(f,g_N)|\Big\|_{L^r}}
{\|f\|_{L^1}\|g_N\|_{L^q}}\gtrsim_{d,q} N,
\qquad\frac1r=1+\frac1q.
\end{equation}
\end{proposition}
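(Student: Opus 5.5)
The plan is to rerun the shell argument from the proof of Theorem~\ref{thm:main} with a truncated version of the critical profile $|u|^{-d/q}$, without the logarithmic factor. Keep the same $f\in C_c^\infty(\RR^d)$: nonnegative, supported in $B(0,1)$, with $f\ge1$ on $B(0,1/2)$, so that $\|f\|_{L^1}\asymp_d1$. Keep the same $\eta$. Fix a radial $\psi\in C_c^\infty(\RR^d)$ with $0\le\psi\le1$, $\psi=1$ on $B(0,1)$ and $\operatorname{supp}\psi\subset B(0,2)$, and put $\psi_N(u)=\psi(2^{-N-1}u)$. Define
\[
g_N(u)=\eta(|u|)\,|u|^{-d/q}\,\psi_N(u)\quad(1\le q<\infty),\qquad g_N=\psi_N\quad(q=\infty).
\]
These are nonnegative and belong to $C_c^\infty(\RR^d)$, since $\eta$ vanishes near the origin and $\psi_N$ has compact support.

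\emph{Upper bound for the denominator.} By radial integration,
\[
\|g_N\|_{L^q}^q\lesssim\int_3^{2^{N+2}}\rho^{-d}\rho^{d-1}\,d\rho\lesssim N,
\]
so $\|g_N\|_{L^q}\lesssim N^{1/q}$. For $q=\infty$ we simply have $\|g_N\|_{L^\infty}=1$.

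\emph{Lower bound on each shell.} Fix $6\le k\le N$ and $t=2^k$, and take $x\in D_k$ and $y\in Y_{t,x}$ as in \eqref{eq:shells}. Then \eqref{eq:geometry} places every point $x-t\sqrt{1-|y|^2}\,\omega$ in the annulus
\[
4<\tfrac{3t}{20}\le|u|\le\tfrac{8t}{5}<2^{N+1}.
\]
On this annulus $\eta=1$ and $\psi_N=1$, so $g_N\ge(8t/5)^{-d/q}\gtrsim t^{-d/q}$. Restricting the integral in \eqref{eq:coarea} to $Y_{t,x}$ exactly as in the derivation of \eqref{eq:lower}, but with $\alpha=0$, gives
\[
\max_{6\le k'\le N}\A_{2^{k'}}(f,g_N)(x)\ge\A_{2^k}(f,g_N)(x)\gtrsim 2^{-kd/r},\qquad x\in D_k.
\]
The shells $D_k$ are pairwise disjoint with $|D_k|\asymp2^{kd}$. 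Computing the $L^r$ quasi-norm through its defining integral therefore yields
\[
\int_{\RR^d}\Big(\max_{6\le k\le N}\A_{2^k}(f,g_N)\Big)^r\,dx\gtrsim\sum_{k=6}^N2^{kd}\,2^{-kd}\gtrsim N,
\]
and hence the numerator in \eqref{eq:quantitative} is $\gtrsim N^{1/r}=N^{1+1/q}$.

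\emph{Conclusion.} Dividing by $\|f\|_{L^1}\|g_N\|_{L^q}\lesssim N^{1/q}$ gives a ratio $\gtrsim N$. This holds also for $q=\infty$, where $r=1$ and the denominator is $O(1)$.

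The only delicate point is the bookkeeping of scales. The plateau of $\psi_N$ must contain the whole annulus $\{3t/20\le|u|\le8t/5\}$ for every $k\le N$, which the choice $\psi_N=1$ on $B(0,2^{N+1})$ guarantees. At the same time, the $L^q$ mass must grow only logarithmically in the outer radius, which is exactly what the critical exponent $-d/q$ provides. With these two facts in hand, the gain of precisely one power of $N$ comes from $N^{1/r-1/q}=N$.
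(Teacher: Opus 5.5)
Your proposal is correct and follows essentially the same route as the paper. You truncate the critical profile $|u|^{-d/q}$ (via $\eta(|u|)\psi_N(u)$ where the paper uses a single radial cutoff $\chi_N$), bound $\|g_N\|_{L^q}^q\lesssim N$, apply the shell lower bound \eqref{eq:lower} with $\alpha=0$ for $6\le k\le N$, and conclude from $N^{1/r-1/q}=N$.
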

\begin{proof}
Take the preceding $f$, and let $\chi_N\in C_c^\infty(\RR^d)$ be a radial cutoff with $0\le\chi_N\le1$, equal to one on $4\le|u|\le 2^{N+1}$ and supported in $2<|u|<2^{N+2}$. Set $g_N(u)=\chi_N(u)|u|^{-d/q}$, where $|u|^{-d/\infty}=1$. For finite $q$, polar coordinates yield $\|g_N\|_q^q\lesssim\int_2^{2^{N+2}}d\rho/\rho\lesssim N$; for $q=\infty$ its supremum norm is at most one. For $6\le k\le N$, \eqref{eq:geometry} lies in the region where $\chi_N=1$. The lower bound \eqref{eq:lower} holds with $\alpha=0$, so the $r$th power of the output quasi-norm is at least $c(N-5)$. Dividing the resulting estimate by $\|f\|_1\|g_N\|_q$ gives $(N-5)^{1/r}/N^{1/q}\gtrsim N$, since $1/r-1/q=1$.
\end{proof}

\section{Lorentz endpoint bounds}\label{sec:lorentz}
We use the standard rearrangement definition
\[
\|h\|_{L^{p,s}}=\left(\int_0^\infty
[t^{1/p}h^*(t)]^s\,\frac{dt}{t}\right)^{1/s},\quad 0<s<\infty,
\qquad
\|h\|_{L^{p,\infty}}=\sup_{t>0}t^{1/p}h^*(t),
\]
up to harmless normalization constants. Here $h^*$ denotes the nonincreasing rearrangement of
$|h|$, defined for $t>0$ by
\[
h^*(t)=\inf\bigl\{\lambda\ge0:
|\{x\in\mathbb R^d:|h(x)|>\lambda\}|\le t\bigr\},
\]
where $|\cdot|$ denotes Lebesgue measure. These are quasi-norms when an index is below one.

\begin{lemma}[The Lorentz tail test]\label{lem:tail}
Let $1\le q<\infty$, $1/r=1+1/q$, and $\alpha\ge0$. For the function $g_\alpha$ in \eqref{eq:g}, with exponent $\alpha$, one has
\begin{equation}\label{eq:lorentztail}
g_\alpha\in L^{q,s}\ \Longleftrightarrow\ \alpha s>1\quad(0<s<\infty),
\qquad g_\alpha\in L^{q,\infty}\quad(\alpha\ge0).
\end{equation}
For the fixed $f$ of Section~\ref{sec:proof},
\begin{equation}\label{eq:negativetail}
\Ml(f,g_\alpha)\notin L^{r,t}\quad\hbox{whenever }0<t<\infty\ \hbox{and }\alpha t\le1.
\end{equation}
\end{lemma}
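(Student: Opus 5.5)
The plan has two parts: an explicit rearrangement computation for $g_\alpha$, and a distribution-function argument that reuses the shell lower bound \eqref{eq:lower}.

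\emph{Part one: the tail test for $g_\alpha$.} Since $g_\alpha$ is radial and, for $|u|\ge e^{2\alpha d^{-1}q}$ or so, a decreasing function of $|u|$, I will compute $g_\alpha^*$ directly. The set $\{|u|\le\rho\}$ has measure $\asymp\rho^d$, and the bounded region $|u|\le C$ only affects $g_\alpha^*$ on a bounded $t$-interval, where $g_\alpha^*$ is bounded. Hence for large $t$, with $t\asymp\rho^d$,
\[
g_\alpha^*(t)\asymp t^{-1/q}(\log t)^{-\alpha}.
\]
This gives $t^{1/q}g_\alpha^*(t)\asymp(\log t)^{-\alpha}$ for large $t$, and this quantity is bounded for small $t$ since $g_\alpha^*$ is bounded. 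So $g_\alpha\in L^{q,\infty}$ for every $\alpha\ge0$. Moreover $\|g_\alpha\|_{L^{q,s}}^s$ is finite if and only if $\int^\infty(\log t)^{-\alpha s}\,dt/t<\infty$, that is, if and only if $\alpha s>1$. The near-origin part is harmless because the integrand behaves like $t^{s/q-1}$ there. This proves \eqref{eq:lorentztail}.

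\emph{Part two: failure of $\Ml(f,g_\alpha)\in L^{r,t}$.} By \eqref{eq:lower} with exponent $\alpha$, $\Ml(f,g_\alpha)\ge c\,2^{-kd/r}k^{-\alpha}$ on $D_k$. The shells $D_k$ are disjoint with $|D_k|\asymp2^{kd}$. Set $\lambda_K=c\,2^{-Kd/r}K^{-\alpha}$. Since $2^{-kd/r}k^{-\alpha}$ is decreasing in $k$ for $k\ge6$, the level set $\{\Ml(f,g_\alpha)>\lambda_K/2\}$ contains $\bigcup_{6\le k\le K}D_k$, whose measure is $\gtrsim2^{Kd}$. Therefore
\[
h^*(c'2^{Kd})\ge\lambda_K/2,\qquad h=\Ml(f,g_\alpha).
\]
Now discretize the Lorentz quasi-norm over the dyadic blocks $t\in[c'2^{(K-1)d},c'2^{Kd}]$. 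On each such block $h^*(t)\ge\lambda_K/2$ by monotonicity, so
\[
\|h\|_{L^{r,t}}^t\gtrsim\sum_K\bigl(2^{Kd/r}\lambda_K\bigr)^t\asymp\sum_K K^{-\alpha t},
\]
which diverges exactly when $\alpha t\le1$. This proves \eqref{eq:negativetail}.

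\emph{Expected difficulty.} The main care point is bookkeeping in part one: checking that the non-monotone region near the cutoff does not disturb the asymptotics of $g_\alpha^*$. I will handle this by comparing $g_\alpha$ from above and below with radial decreasing functions that agree with it for $|u|$ large. The two-sided asymptotic is needed for the "only if" direction of \eqref{eq:lorentztail}.
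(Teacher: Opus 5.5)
Your proposal is correct and follows the paper's proof essentially step for step. For \eqref{eq:lorentztail} you use the two-sided asymptotic $g_\alpha^*(v)\asymp v^{-1/q}(\log v)^{-\alpha}$ for large $v$. For \eqref{eq:negativetail} you turn \eqref{eq:lower} into a lower bound $2^{-kd/r}k^{-\alpha}$ for the rearrangement on disjoint intervals near $2^{kd}$ with $dv/v$-length of order one, which yields $\sum_k k^{-\alpha t}$. The paper uses only the single shell $D_k$, whose measure is already $\asymp 2^{kd}$, so your union of shells is harmless but unnecessary.

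One small correction to your closing remark. $g_\alpha$ is decreasing on all of $\{|u|\ge4\}$, so the threshold you give is not needed. More importantly, because $g_\alpha$ vanishes near the origin, it has no radial nonincreasing minorant with the same tail, so your planned comparison from below cannot work as stated. For the lower bound, compare instead with the truncation of $g_\alpha$ to $\{|u|\ge4\}$: its distribution function differs from that of the least radial nonincreasing majorant by at most $|B(0,4)|$. This is exactly what your distribution-function argument in the main text already does.
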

\begin{proof}
For large $|u|$ the radial function $g_\alpha$ is decreasing and its decreasing rearrangement satisfies
$g_\alpha^*(v)\asymp v^{-1/q}(\log v)^{-\alpha}$ as $v\to\infty$. Its boundedness and vanishing near the origin handle bounded $v$. This proves \eqref{eq:lorentztail} by direct integration.

For any $\alpha\ge0$, the geometric argument leading to \eqref{eq:lower} remains valid, independently of whether $g_\alpha$ belongs to $L^q$. Put $H=\Ml(f,g_\alpha)$ and write $m_k=|D_k|=c_d'2^{kd}$, with $c_d'>0$. By \eqref{eq:lower} and the definition of rearrangement,
$H^*(m_k/2)\gtrsim 2^{-kd/r}k^{-\alpha}$.
Fix $a,b$ with $2^{-d}b<a<b<1/2$. The intervals $I_k=[am_k,bm_k]$ are disjoint, and monotonicity gives
$H^*(v)\gtrsim2^{-kd/r}k^{-\alpha}$ for $v\in I_k$.
Therefore
\[
\int_0^\infty[v^{1/r}H^*(v)]^t\,\frac{dv}{v}
\gtrsim\sum_{k\ge6}k^{-\alpha t},
\]
which diverges under the stated condition.
\end{proof}

For $d/(d-1)<q<\infty$, a Lorentz replacement admits a complete index description. The positive part follows from the slicing estimate
\begin{equation}\label{eq:product}
\Mf(f,g)(x)\lesssim\HL f(x)\,\Sph g(x),
\end{equation}
where $\Sph$ is the linear spherical maximal operator in $\RR^d$; see \cite[Lemma~2.1]{JL}. The operator $\HL$ is of weak type $(1,1)$, while $\Sph$ is bounded on $L^{q,s}$ for every $0<s\le\infty$ and $q>d/(d-1)$ by real interpolation between nearby strong $L^p$ estimates. For $1/r=1+1/q$, the elementary rearrangement bound $(uv)^*(2v_0)\le u^*(v_0)v^*(v_0)$ yields
\begin{equation}\label{eq:holderLorentz}
\|uv\|_{L^{r,s}}\lesssim\|u\|_{L^{1,\infty}}\|v\|_{L^{q,s}}
\qquad(0<s\le\infty),
\end{equation}
including $r<1$. Thus there is no appeal to a Banach-space triangle inequality.

\begin{theorem}[Sharp Lorentz indices on a noncritical face]\label{thm:lorentz}
Let $d\ge2$, $d/(d-1)<q<\infty$ and $1/r=1+1/q$. For $0<s,t\le\infty$, each of the maps
\begin{equation}\label{eq:iff}
\Mf,\ \Ml:\quad L^1(\RR^d)\times L^{q,s}(\RR^d)
\longrightarrow L^{r,t}(\RR^d)
\end{equation}
is bounded if and only if $s\le t$.
\end{theorem}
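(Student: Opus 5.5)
The proof has two directions. Sufficiency ($s\le t$) comes from the product bound \eqref{eq:product} combined with the Lorentz Hölder inequality \eqref{eq:holderLorentz}. Necessity ($s>t$) comes from the tail test in Lemma~\ref{lem:tail}. Since $\Ml\le\Mf$ pointwise, I will prove boundedness only for $\Mf$ and failure only for $\Ml$.

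\emph{Sufficiency.} Assume $s\le t$. I first show that $\Mf$ maps $L^1\times L^{q,s}$ into $L^{r,s}$. By \eqref{eq:product},
\[
|\Mf(f,g)|\lesssim \HL f\cdot \Sph g .
\]
Since $\HL$ is of weak type $(1,1)$, we have $\|\HL f\|_{L^{1,\infty}}\lesssim\|f\|_{L^1}$. Since $q>d/(d-1)$, the operator $\Sph$ is bounded on $L^{q,s}$, so $\|\Sph g\|_{L^{q,s}}\lesssim\|g\|_{L^{q,s}}$. Applying \eqref{eq:holderLorentz} with index $s$ then gives
\[
\|\Mf(f,g)\|_{L^{r,s}}\lesssim\|f\|_{L^1}\|g\|_{L^{q,s}}.
\]
To reach $L^{r,t}$, I use the nesting $L^{r,s}\hookrightarrow L^{r,t}$ for $s\le t$, which holds for every $0<r<\infty$, including $r<1$. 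The standard proof applies unchanged. Monotonicity of $h^*$ gives $v^{1/r}h^*(v)\lesssim\|h\|_{L^{r,s}}$, which is the case $t=\infty$. For finite $t$, write $[v^{1/r}h^*]^t=[v^{1/r}h^*]^s[v^{1/r}h^*]^{t-s}$ and bound the second factor by the supremum. Nothing here uses a triangle inequality. The case $s=\infty$ forces $t=\infty$ and is covered by the same chain.

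\emph{Necessity.} Assume $s>t$, so $t<\infty$. I choose $\alpha$ with $1/s<\alpha\le1/t$; when $s=\infty$, read this as $0\le\alpha\le1/t$, and take $\alpha=0$. By \eqref{eq:lorentztail}, $g_\alpha\in L^{q,s}$: for finite $s$ because $\alpha s>1$, and for $s=\infty$ unconditionally. On the other hand, $\alpha t\le1$, so \eqref{eq:negativetail} gives $\Ml(f,g_\alpha)\notin L^{r,t}$ for the fixed $f\in C_c^\infty\subset L^1$. Thus $\Ml$ sends a pair in $L^1\times L^{q,s}$ outside $L^{r,t}$, so it is unbounded, and $\Mf\ge\Ml$ is unbounded as well. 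Note that $g_\alpha$ need not lie in $L^q$; the lemma's lower bound does not require this.

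\emph{Main obstacle.} The delicate step is the claim that $\Sph$ is bounded on $L^{q,s}$ for every $0<s\le\infty$, including $s<1$. Real interpolation between strong bounds at $p_0<q<p_1$, all with $p_i>d/(d-1)$, produces $(L^{p_0},L^{p_1})_{\theta,s}=L^{q,s}$ for every $0<s\le\infty$. This step also needs $\Sph$ to be sublinear and the $K$-functional argument to be valid in the quasi-Banach range $s<1$. I would check this via the Marcinkiewicz-type proof, which splits $g$ at height $g^*(v^{\cdot})$ and uses only monotonicity of rearrangements, so it is valid for $s<1$. The remaining point is the Hölder inequality \eqref{eq:holderLorentz}. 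It follows from $(uv)^*(2v_0)\le u^*(v_0)v^*(v_0)$ and $u^*(v_0)\le\|u\|_{L^{1,\infty}}/v_0$ after the change of variables $v=2v_0$, using $1/r=1+1/q$.
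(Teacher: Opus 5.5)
Your proposal is correct and follows the paper's proof essentially step for step. For sufficiency you use the slicing bound \eqref{eq:product}, the weak $(1,1)$ bound for $\HL$, $L^{q,s}$-boundedness of $\Sph$, the Lorentz H\"older inequality \eqref{eq:holderLorentz} and the embedding $L^{r,s}\hookrightarrow L^{r,t}$; for necessity you use Lemma~\ref{lem:tail} with $1/s<\alpha\le1/t$ (or $\alpha=0$ when $s=\infty$) together with $\Mf\ge\Ml$, and your extra details (the embedding for $r<1$, the interpolation remarks for $s<1$) only fill in what the paper treats as standard.
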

\begin{proof}
When $s\le t$, \eqref{eq:product}--\eqref{eq:holderLorentz}, followed by the embedding $L^{r,s}\hookrightarrow L^{r,t}$, prove both estimates. Conversely, if $t<s<\infty$, choose $1/s<\alpha\le1/t$. Lemma~\ref{lem:tail} provides an input $g_\alpha\in L^{q,s}$ for which the lacunary output fails to belong to $L^{r,t}$. The same construction works for $s=\infty$ and finite $t$ upon choosing $0\le\alpha\le1/t$. Because $\Mf\ge\Ml$ on these nonnegative inputs, both bounds fail.
\end{proof}

In particular, the theorem gives $L^1\times L^q\to L^{r,q}$ and $L^1\times L^{q,r}\to L^r$, but not $L^1\times L^q\to L^r$. At $q=\infty$, \eqref{eq:product} yields the separate weak estimate $L^1\times L^\infty\to L^{1,\infty}$. No positive Lorentz claim for $1\le q\le d/(d-1)$ is needed here. The distinction also clarifies why interpolation of weak-output bounds with the first input fixed in $L^1$ does not automatically yield the stronger assertion: $(L^{q_0},L^{q_1})_{\theta,r}=L^{q,r}\subsetneq L^q$ when $1/q=(1-\theta)/q_0+\theta/q_1$ and $r<q$.

\section{A multilinear extension}\label{sec:multi}
For $m\ge2$ and $d\ge2$, let $\sigma_{md-1}$ be normalized surface measure on $S^{md-1}\subset(\RR^d)^m$ and put
\[
\A_t^{(m)}(f_1,\ldots,f_m)(x)=\int_{S^{md-1}}\prod_{j=1}^m f_j(x-ty_j)\,d\sigma_{md-1}(y),\quad
\Mn=\sup_{k\in\mathbb Z}|\A_{2^k}^{(m)}|.
\]
\begin{proposition}\label{prop:multi}
If $1\le q_j\le\infty$ for $2\le j\le m$ and $1/r=1+\sum_{j=2}^m1/q_j$, there are nonnegative $f_1\in C_c^\infty(\RR^d)$ and $f_j\in C^\infty(\RR^d)\cap L^{q_j}(\RR^d)$ such that
$\Mn(f_1,\ldots,f_m)\notin L^r(\RR^d)$. Hence every strong-type H\"older estimate with an $L^1$ input fails for the lacunary and full multilinear spherical maximal operators.
\end{proposition}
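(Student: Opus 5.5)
We mimic the proof of Theorem~\ref{thm:main}, with the first input carrying the $L^1$ mass and the remaining $m-1$ inputs carrying slowly decaying radial tails. Take $f_1=f$ from Section~\ref{sec:proof}: supported in $B(0,1)$ and $f\ge1$ on $B(0,1/2)$. For each $j\ge2$ with $q_j<\infty$, set $f_j=g^{(j)}$ as in \eqref{eq:g}, that is,
\[
f_j(u)=\eta(|u|)\,|u|^{-d/q_j}(\log|u|)^{-\alpha_j},
\]
with $\alpha_jq_j>1$ so that $f_j\in L^{q_j}$ by \eqref{eq:gq}. For $q_j=\infty$, set $f_j\equiv1$ and $\alpha_j=0$. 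We impose the additional constraint $r\sum_j\alpha_j\le1$. This is compatible with the other requirements: since $1/r=1+\sum_j 1/q_j$, we need numbers $\alpha_j>1/q_j$ (for finite $q_j$) whose sum is at most $1/r$, and there is slack of exactly $1$. For instance, take $\alpha_j=1/q_j+1/(m-1)$ for finite $q_j$.

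The only new ingredient is the analogue of the slicing formula \eqref{eq:coarea}. Write $y=(y_1,y')$ with $y'\in(\RR^d)^{m-1}$. Then
\[
\int_{S^{md-1}}H\,d\sigma_{md-1}=c_{m,d}\int_{B(0,1)}(1-|y_1|^2)^{((m-1)d-2)/2}\int_{S^{(m-1)d-1}}H\bigl(y_1,\sqrt{1-|y_1|^2}\,\omega\bigr)\,d\sigma(\omega)\,dy_1 .
\]
This is the standard coarea/beta identity, and its weight is uniformly positive on $|y_1|\le5/8$. We only need this as a lower bound with a positive density, so the exact constant is irrelevant. For $x\in D_k$ and $y_1\in Y_{t,x}$, we again get $f_1(x-ty_1)\ge1$ and $\sqrt{1-|y_1|^2}>3/4$. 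Each component satisfies $|\omega_j|\le1$, and the triangle inequality gives
\[
|x-t\sqrt{1-|y_1|^2}\,\omega_j|\le 8t/5 .
\]

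The main obstacle is the lower bound. In the bilinear case $|\omega|=1$ forced $|x-t\sqrt{\cdot}\,\omega|\ge 3t/20$, but now the components $\omega_j$ can be small, so that point may lie near the origin where $\eta$ vanishes. I will handle this by restricting the $\omega$-integral to the set
\[
\Omega=\bigl\{\omega\in S^{(m-1)d-1}:|\omega_j-e\,(m-1)^{-1/2}|<\delta\ \text{for all }j\bigr\}
\]
for a fixed unit vector $e$ and a small $\delta$. This set has positive measure depending only on $m,d$. To keep the points away from the origin, choose $e$ with $e\cdot x\le0$. Since $e$ must not depend on $x$, split $D_k$ into two halves according to the sign of $x\cdot e_1$ and use $\mp e_1$ on each. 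Then $|x-s\omega_j|\ge|x|\ge t/2-O(\delta t)\ge4$ for $t\ge64$. Consequently every $f_j$ factor is at least $\gtrsim t^{-d/q_j}k^{-\alpha_j}$. Integrating over $Y_{t,x}\times\Omega$ gives
\[
\Mn(f_1,\dots,f_m)(x)\gtrsim t^{-d}\prod_{j}t^{-d/q_j}k^{-\alpha_j}=2^{-kd/r}k^{-\sum_j\alpha_j}\qquad\text{on }D_k .
\]

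Summing over the disjoint shells gives
\[
\int\Mn(f_1,\ldots,f_m)^r\gtrsim\sum_{k\ge6}k^{-r\sum_j\alpha_j}=\infty
\]
by the choice of the $\alpha_j$, which proves the first claim. For the final sentence, the averages are symmetric under permuting the inputs, so the $L^1$ slot may be any index. Moreover, the full multilinear maximal operator dominates $\Mn$ pointwise on nonnegative inputs. Hence any strong bound $L^1\times\prod_j L^{q_j}\to L^r$ would contradict the first claim.
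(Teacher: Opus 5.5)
Your proposal is correct and follows the paper's proof essentially step by step: the same $f_1$, the same logarithmic tails with $\sum_j\alpha_j\le 1/r$, the same slicing in the first $d$ coordinates, and the same shells $D_k$ with balls $Y_{t,x}$. The only difference is the set used on the residual sphere: you take a small neighbourhood of $(e,\dots,e)/\sqrt{m-1}$ with $e\cdot x\le0$, which gives $|x-tb\,\omega_j|\ge t/2-\delta t$, whereas the paper uses $E_x=\{\omega:\omega_j\cdot x\le0,\ 2\le j\le m\}$, whose measure is exactly $2^{-(m-1)}$ by sign symmetry and which yields $|x-tb\,\omega_j|\ge|x|\ge t/2$ with no $\delta$-loss.
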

\begin{proof}
Disintegration with respect to the first $d$ coordinates gives
\begin{equation}\label{eq:multislice}
\begin{split}
\int_{S^{md-1}}F(y_1,y')\,d\sigma_{md-1}
={}&C_{m,d}\int_{B(0,1)}(1-|y_1|^2)^{((m-1)d-2)/2}\\[-3pt]
&\times\int_{S^{(m-1)d-1}}F\bigl(y_1,\sqrt{1-|y_1|^2}\,\omega\bigr)
\,d\sigma_{(m-1)d-1}(\omega)\,dy_1,
\end{split}
\end{equation}
where $y'=(y_2,\ldots,y_m)$, $\omega=(\omega_2,\ldots,\omega_m)$, and
\[
C_{m,d}=\frac{\Gamma(md/2)}{\pi^{d/2}\Gamma((m-1)d/2)}.
\]
The normalization follows by the same beta integral as \eqref{eq:coarea}. Fix $f_1=f$ from Section~\ref{sec:proof}. Set $P=\sum_{j=2}^m1/q_j$. For each finite $q_j$, choose $\alpha_j>1/q_j$ so that
\begin{equation}\label{eq:choosemulti}
A:=\sum_{j:q_j<\infty}\alpha_j<1+P=1/r,
\end{equation}
and let $f_j(u)=\eta(|u|)|u|^{-d/q_j}(\log|u|)^{-\alpha_j}$, extended by zero near the origin. If $q_j=\infty$ take $f_j\equiv1$. Condition \eqref{eq:choosemulti} is possible by choosing positive excesses whose sum is smaller than one; all these inputs belong to their stated spaces by \eqref{eq:gq}.

Fix $x\in D_k$, $t=2^k$, and restrict $y_1$ to $Y_{t,x}$ in \eqref{eq:multislice}. The marginal weight is bounded from below and this ball has measure comparable to $t^{-d}$. On the residual sphere impose
\[
E_x=\{\omega:\omega_j\cdot x\le0\text{ for }2\le j\le m\}.
\]
Sign changes $\omega_j\mapsto-\omega_j$ preserve residual surface measure; the boundaries have zero measure and so $\sigma(E_x)=2^{-(m-1)}$. Writing $b=\sqrt{1-|y_1|^2}\le1$, for every $\omega\in E_x$ and $j\ge2$ we have
\[
\frac t2\le|x-tb\omega_j|\le\frac{3t}{5}+t=\frac{8t}{5}.
\]
In particular all finite-$q_j$ cutoffs equal one on this region and
$f_j(x-tb\omega_j)\gtrsim t^{-d/q_j}k^{-\alpha_j}$; the same bound with zero exponents holds for constant inputs. Hence
\[
\Mn(f_1,\ldots,f_m)(x)\gtrsim
 t^{-d(1+P)}k^{-A}=2^{-kd/r}k^{-A}\quad(x\in D_k).
\]
Integrating its $r$th power over disjoint shells gives $\sum_{k\ge6}k^{-Ar}=\infty$ by \eqref{eq:choosemulti}. Symmetry permits the $L^1$ entry in any position, and domination gives the full maximal conclusion.
\end{proof}

\section{Consequences for strong-type regions and earlier statements}\label{sec:context}
Combining Theorem~\ref{thm:main} with the strong interior estimates, the H\"older \emph{strong}-type boundedness ranges for $d\ge2$, $1\le p,q\le\infty$ and $1/r=1/p+1/q$, are precisely
\begin{equation}\label{eq:regions}
\begin{array}{ll}
\Ml:\quad &1<p,q\le\infty,\\[2pt]
\Mf:\quad &1<p,q\le\infty\quad\text{and}\quad r>\dfrac{d}{2d-1}.
\end{array}
\end{equation}
The corner $p=q=\infty$ is immediate from the pointwise bound $|\A_t(f,g)|\le\|f\|_\infty\|g\|_\infty$. For the lacunary interior see \cite[Theorem~1]{BF}, and for the full interior see the strong product bounds and interpolation in \cite[Section~2]{JL}. For completeness, the remaining necessary condition for the full maximal operator follows from a cap test. Choose nonnegative $f,g\in C_c^\infty$ that are at least one near the origin. For large $|x|$, set $t=\sqrt2|x|$; the point $(x/t,x/t)$ lies on $S^{2d-1}$. A spherical cap about it of Euclidean radius $c/t$ has normalized measure comparable to $t^{-(2d-1)}$ and keeps both $f(x-ty)$ and $g(x-tz)$ at least one, for sufficiently small fixed $c>0$. Thus $\Mf(f,g)(x)\gtrsim|x|^{-(2d-1)}$ for large $|x|$, which cannot belong to $L^r$ when $r(2d-1)\le d$.

Theorem~\ref{thm:main} also identifies strong-type endpoint assertions in existing boundedness results that require a narrower formulation. Specifically, the strong range in \cite[Theorem~1.1, pp.~3--4]{JL} includes $p=1$ and finite $q>d/(d-1)$, with $r=q/(q+1)$. These instances do not satisfy the asserted strong inequality; one explicit choice is $(d,p,q,r)=(3,1,2,2/3)$. The proof in \cite[p.~11, following (2.5)]{JL} interpolates weak-output estimates with the first exponent fixed at one. Theorem~\ref{thm:lorentz} records the corresponding Lorentz distinction: for $q>d/(d-1)$, the output $L^{r,q}$ is available for an $L^q$ second input, whereas an $L^r$ output requires the finer $L^{q,r}$ second input. Neither the weak estimates in \cite{JL} nor its strong estimates with both input exponents strictly larger than one are contradicted by this observation.

In the multilinear setting, \cite[Theorem~1, Case~II(d)]{Dos}, in its 2019 preprint version, contains open faces with one input exponent equal to one and an asserted strong estimate. Proposition~\ref{prop:multi} excludes these instances. Faces without an $L^1$ input and weak-type conclusions are outside the scope of the proposition.
The comparison here is with arXiv:1911.04071v1 (2019); the final journal text has not been independently verified.

For the vertex configurations with input exponents in $\{1,\infty\}$, the
published one-dimensional study \cite[p.~1057]{DosR} explicitly invokes
\cite[Proposition~2]{Dos} and recalls the corresponding characteristic-function
counterexample. This prior vertex obstruction is distinct from the failure
along the entire face $L^1\times L^q$, $1<q<\infty$, established above. The
one-dimensional results in \cite{DosR} are not being used as a correction of
the higher-dimensional statement in \cite{Dos}.

\cite[Theorem~2]{Gao} explicitly attributes its full multilinear maximal boundedness
statement to \cite{Dos}; in particular, its stated region includes
$(d,p_1,p_2,r)=(3,1,2,2/3)$. No new proof of these endpoint estimates is
given there. The independent lacunary estimate in \cite[Theorem~4]{Gao}
assumes all input exponents are strictly greater than one. The single-scale
averaging estimate in \cite[Theorem~1]{Gao} is also distinct from the
present maximal obstruction.

The lacunary strong estimate in \cite[Theorem~1]{BF} is stated for $1<p,q\le\infty$ and agrees with \eqref{eq:regions}. The supplementary boundary estimates discussed in \cite[Remark~2]{BF}, however, and the full-maximal range reproduced in \cite[Theorem~10]{BF}, include the excluded $p=1$ instances. The same distinction applies to the full-maximal characterization in \cite[Theorem~2.2]{Borges} and to the lacunary boundary discussion in \cite[Section~3]{Borges}.

Theorem~1.6 of \cite[p.~1805]{CLS}, for the bilinear surface $\Sigma=\tfrac12 S^{2d-1}$ and curvature parameter $\kappa=2d-1$, includes strong lacunary spherical estimates with $p_1=1$ and $2d/(2d-1)<p_2\le2$. The dyadic dilations of $\Sigma$ are those of $S^{2d-1}$ after an index shift, so Theorem~\ref{thm:main} applies to these specializations. Likewise, the strong lacunary range in \cite[Theorem~2.3]{CSS}, arXiv:2510.03094v3, includes $L^1\times L^q$ for $1<q<\infty$ in $d\ge4$ and must exclude these pairs.

These consequences concern the identified strong-type inequalities for the spherical maximal operators. They do not imply failure of weak-type or restricted weak-type estimates, of independently established interior bounds, or of estimates for other averaging geometries and single-scale operators.

\end{document}